\title{A pullback diagram in the coarse category}
\author{Elisa Hartmann\thanks{Department of Mathematics Institute for Algebra and Geometry, Karlsruhe Institute of Technology, Germany}}
\begin{document}

\maketitle

\begin{abstract}
This paper studies the asymptotic product of two metric spaces. It is well defined if one of the spaces is visual or if both spaces are geodesic. In this case the asymptotic product is the pullback of a limit diagram in the coarse category. Using this product construction we can define a homotopy theory on coarse metric spaces in a natural way. We prove that all finite colimits exist in the coarse category.

{\bf keywords:} coarse category, asymptotic product, coarse homotopy, finite colimits, metric spaces, cocomplete
\end{abstract}

\tableofcontents

\section{Introduction}

The area of coarse geometry studies metric spaces as geometric objects from an asymptotic viewpoint. Key results in this area include Gromov's result on groups of polynomial growth \cite{Gromov1981} and Yu's result on the Novikov conjecture \cite{Yu1998}.

The coarse category\footnote{The coarse category consists of metric spaces as objects and coarse maps modulo close as morphisms. We use the convention that metrics only take finite values.} is not equipped with a faithful functor to the category of sets and is therefore not concrete. Nonetheless finite coproducts exist in the coarse category \cite{Weighill2016},\cite{Hartmann2017a}. They can be characterized as coarse covers in which every two elements are coarsely disjoint. Most boundaries on coarse spaces preserve finite coproducts \cite{Weighill2016},\cite{Hartmann2019a},\cite{Roe2003},\cite{Hartmann2017b}.

Conversely products do not exist in the coarse category. In fact the coarse category does not even have a final object. The coarse product of two coarse spaces $X,Y$ is the product of sets $X\times Y$ equipped with the pullback coarse structure from $X,Y$ and the projection maps. This way one can define a natural coarse structure on the product, but the projections are not coarse maps. Thus although the coarse product is well defined up to coarse equivalence it is not a product in the strict category sense.

Now we introduce a pullback diagram in the coarse category, the limit of which is called the \emph{asymptotic product}. The coarse version of the point, $\R_{\ge 0}$ the non-negative real numbers, is unfortunately not a final object in the coarse category. Nonetheless we look at a pullback diagram of metric spaces
\[
\xymatrix{
& Y\ar[d]^{d(\cdot,y_0)}\\
X\ar[r]_{d(\cdot,x_0)} 
& \R_{\ge 0}.
}
\]
Here $x_0\in X,y_0\in Y$ are fixed points. The pullback of this diagram is called $X\ast Y$. It exists if the spaces $X,Y$ are nice enough as studied in Theorem~\ref{thm:pullbackexists}. Indeed we only need both $X,Y$ to be geodesic or if $X$ is not geodesic $Y$ to be visual.

If $X,Y$ are hyperbolic proper geodesic metric spaces then $X\ast Y$ is hyperbolic geodesic proper by \cite{Foertsch2003}, therefore its Gromov boundary $\partial(X\ast Y)$ is defined. The paper \cite{Foertsch2003} shows there is a homeomorphism
\[
\partial(X\ast Y)=\partial (X)\times \partial(Y).
\]
Therefore the asymptotic product is indeed a product on the Gromov boundary. Thus the terminology \emph{product} is justified.

Equipped with the asymptotic product we can define a coarse version of homotopy. Our coarse version of an interval is denoted by $I$, any visual metric space might be a candidate for such a space, we chose $I=\R_{\ge0}\times \R_{\ge0}$, the first quadrant in $\R^2$. Then a coarse homotopy is defined to be a coarse map 
\[
H:X\ast I\to Y.
\]
Here $\ast$ refers to the asymptotic product and $X,Y$ are metric spaces. There are other notions of homotopy on metric spaces which were studied in the coarse setting. The Lipschitz homotopy is due to Gromov \cite{Gromov1993}. It was shown by Roe that controlled operator $K$-theory is a Lipschitz homotopy invariant \cite{Roe1996}. Moreover Lipschitz homotopy has been used by Yu in the proof of the coarse Baum-Connes conjecture for spaces which admit a uniform embedding into Hilbert space \cite{Yu2000}. Other notions of coarse homotopy appeared in \cite{Mitchener2001,Wulff2020,Mitchener2020,Bunke2016,Sawicki2005}. In particular the homotopy theories introduced in \cite{Mitchener2020} will be shown to be equivalent to our homotopy theory if the parameter function in \cite{Mitchener2020} is chosen appropriately. This homotopy theory in turn is equivalent to the homotopy theory introduced in \cite{Bunke2016} if the two parameter functions in \cite{Bunke2016} are chosen appropriately.

The class of flasque metric spaces erases many coarse cohomology and homology theories. Theorem~\ref{thm:flasqueacyclic} shows that a flasque metric space $X$ is coarsely homotopy equivalent to the product $X\times\Z_{\ge0}$. 

The last section will be about colimits in the coarse category. The coproduct of two metric spaces is known to exist and appears in many isomorphic forms. We show the coequalizer of two coarse maps with common domain and codomain exists. This result provides a proof that finite colimits exist in the coarse category.

\section{Metric Spaces}
\label{sec:metric}

\begin{defn}
 Let $(X,d)$ be a metric space. Then the \emph{coarse structure associated to $d$} on $X$ consists of those subsets $E\s X^2$ for which
 \[
  \sup_{(x,y)\in E}d(x,y)<\infty.
 \]
 We call an element of the coarse structure an \emph{entourage}.  In what follows we assume the metric $d$ to be finite for every $(x,y)\in X^2$.
\end{defn}

\begin{defn}
 A map $f:X\to Y$ between metric spaces is called \emph{coarse} if
 \begin{itemize}
  \item $E\s X^2$ being an entourage implies that $\zzp f E$ is an entourage \emph{(coarsely uniform)};
  \item and if $A\s Y$ is bounded then $\iip f A$ is bounded \emph{(coarsely proper)}.
 \end{itemize}
 Two maps $f,g:X\to Y$ between metric spaces are called \emph{close} if
 \[
 f\times g(\Delta_X)
 \]
 is an entourage in $Y$. Here $\Delta_X$ denotes the diagonal in $X^2$.
\end{defn}

\begin{notat}
 If $S\s X\times X,A\s X$ are subsets we define 
 \[
  S[A]:=\{x\in X\mid \exists y\in A:(x,y)\in E\}.
 \]
 If $S,T\s X\times X$ then
 \[
  S\circ T:=\{(x,z)\in X\times X\mid \exists y\in X: (x,y)\in S,(y,z)\in T\}
 \]
If $R\ge 0$ we define
\[
 \Delta_R:=\{(x,y)\in X\times X\mid d(x,y)\le R\}.
\]
\end{notat}

\begin{notat}
 A map $f:X\to Y$ between metric spaces is called
 \begin{itemize}
 \item \emph{coarsely surjective} if there is an entourage $E\s Y^2$ such that 
 \[
  E[\im f]=Y;
 \]
 \item \emph{coarsely injective} if for every entourage $F\s Y^2$ the set $\izp f F$ is an entourage in $X$.
\end{itemize}
\end{notat}

\begin{rem}
 We study metric spaces up to coarse equivalence. A coarse map $f:X\to Y$ between metric spaces is a \emph{coarse equivalence} if
 \begin{itemize}
  \item There is a coarse map $g:Y\to X$ such that $f\circ g$ is close to $id_Y$ and $g\circ f$ is close to $id_X$.
 \item or equivalently if $f$ is both coarsely injective and coarsely surjective.
 \end{itemize}
 The \emph{coarse category} consists of coarse (metric) spaces as objects and coarse maps modulo close as morphisms. Coarse equivalences are the isomorphisms in this category. 
\end{rem}

The following notion is based on the notion of $\omega$-excisive \cite{Higson1993}.
\begin{defn}
 If $X$ is a metric space and $A,B\s X$ are subsets then $(A,B)$ are called a \emph{coarsely excisive pair} if $A\cup B=X$ and for each $R>0$ there is some $S>0$ such that
 \[
  \Delta_R[A]\cap\Delta_R[B]\s \Delta_S[A\cap B].
 \]
\end{defn}
If $X,Y$ are two metric spaces then unless otherwise stated we define the metric on $X\times Y$ to be
\[
 d((x_1,y_1),(x_2,y_2))=\max(d(x_1,x_2),d(y_1,y_2))
\]
for every $(x_1,y_1),(x_2,y_2)\in X\times Y$.

\section{Coarse rays and quasigeodesic rays}

In~\cite{Kuchaiev2008} every metric space that is coarsely equivalent to $\Z_{\ge0}$ is called a coarse ray. We keep with that notation: If $X$ is a metric space a sequence $(x_i)_i\s X$ is called a \emph{coarse ray in $X$} if there is a coarsely injective coarse map $\rho:\Z_{\ge0}\to X$ such that $x_i=\rho(i)$ for every $i$.

Recall \cite[Definition~6.16]{Roe2003}:
\begin{defn}
      Let $X$ be a metric space and $C\ge 0,D>0$ be constants. A 
\emph{$(D,C)$-quasigeodesic ray} in $X$ is a map $\rho:\Z_{\ge0}\to X$ such that
\[
      \ii D|n-m|-C\le d(\rho(n),\rho(m))\le D|n-m|+C
\]
for every $n,m\in \Z_{\ge0}$.
\end{defn}

\begin{defn}
      A geodesic metric space $X$ is called \emph{large-scale visual} if there 
exist $x_0\in X$, $C\ge 0,D>0$ such that for every $y\in X$ there exists a 
$(D,C)$-quasigeodesic ray $\rho$ starting at $x_0$ with $y\in \rho(\Z_{\ge0})$.  

If $x_0\in X$ then $X$ is called \emph{visual from $x_0$} if it is geodesic and every geodesic segment starting at $x_0$ can be extended to a geodesic ray starting at $x_0$. The space $X$ is called \emph{visual} if it is visual from $x_0$ for some point $x_0\in X$.  
\end{defn}

\begin{rem}
    Recall \cite[Definition~3.A.8.a)]{Cornulier2016}: A map $\varphi:X\to Y$ 
between metric spaces is called \emph{large-scale Lipschitz} if there exist 
constants $D>0,C\ge 0$ such that
\[
      d(\varphi(x),\varphi(x'))\le D d(x,x')+C
\]
for every $x,x'\in X$. Note that a coarse map between geodesic metric spaces is already large-scale Lipschitz and every coarse equivalence between geodesic metric spaces is a quasi-isometry.
\end{rem}

\begin{lem}
      On geodesic metric spaces the property large-scale visual is coarse.
\end{lem}
\begin{proof}
      Let $\alpha: X\to Y$ be a coarse equivalence between geodesic metric 
spaces with $X$ large-scale visual. We prove $Y$ is visual. Suppose $C'\ge 0,D'>0$ are 
constants and $x_0\in X$ is a point such that every point $x\in X$ is on a 
$(D',C')$-quasigeodesic ray joining  $x_0$ to $x$. Suppose $\alpha$ is 
$K$-coarsely surjective and a $(D'',C'')$-quasi-isometric embedding.

Let $y\in Y$ be a point. Then there exists $x\in X$ with $d(\alpha(x),y)<K$. 
And there exists a $(D',C')$-quasigeodesic ray $\rho$ in $X$ starting at $x_0$ with $x=\rho(n)$ for some $n\in \N$. Then $\alpha\circ \rho$ is a 
$(D''D',D''C'+C'')$-quasigeodesic ray starting at $\alpha(x_0)$ with 
$\alpha(x)= \alpha\circ \rho(n)$. Define
\begin{align*}
\rho':\Z_{\ge0}&\to Y\\
i&\mapsto \begin{cases}
               \alpha\circ\rho(i) & i< n\\
               y & i=n\\
               \alpha\circ \rho(i) & i>n.
         \end{cases}
\end{align*}
Now
\begin{align*}
 d(\rho'(i),\rho'(j))
 &\le d(\alpha\circ\rho(i),\alpha\circ\rho(j))\\
 &\le D''d(\rho(i),\rho(j))+C''+K\\
 &\le D'D''|i-j|+C'+C''+K
\end{align*}
and in the other direction
\begin{align*}
 d(\rho'(i),\rho'(j))
 &\ge d(\alpha\circ \rho(i),\alpha\circ(j))-K\\
 &\ge D''^{-1}d(\rho(i),\rho(j))-C''-K\\
 &\ge D'^{-1}D''^{-1}d(\rho(i),\rho(j))-C'-C''-K.
\end{align*}

Thus we have shown $\rho'$ is a $(D''D',C'+C''+K)$-quasigeodesic ray starting at $\alpha(x_0)$ with $y\in \rho'(\Z_{\ge0})$.
\end{proof}

\section{Asymptotic product}

\begin{lem}
If $X$ is a metric space, fix a point $x_0\in X$, then
\begin{align*}
d(x_0,\cdot):X&\to \R_{\ge 0}\\
x&\mapsto d(x_0,x)
\end{align*}
is a coarse map.
\end{lem}
\begin{proof}
We show $d(x_0,\cdot)$ is coarsely uniform: Let $R\ge 0$ be a number. If $d(x,y)\le R$ then 
  \begin{align*}
   |d(x_0,x)-d(x_0,y)|
   &=\begin{cases}
      d(x_0,x)-d(x_0,y) & d(x_0,x)\ge d(x_0,y)\\
      d(x_0,y)-d(x_0,x) & d(x_0,y)>d(x_0,x)
     \end{cases}\\
     &\le d(x,y)\\
     &\le R
  \end{align*}
Now we show $d(x_0,\cdot)$ is coarsely proper. If $B\s \R_{\ge 0}$ is a bounded set then there is some $R\ge 0$ with $B\s B(0,R)$. Then
\begin{align*}
 (d(x_0,\cdot))^{-1}(B)
 &\s \iip{d(x_0,\cdot)}{B(0,R)}\\
 &=B(x_0,R)
\end{align*}
is bounded.
\end{proof}

\begin{defn}\name{asymptotic product}
\label{defn:coarseproduct}
Let $X,Y$ be metric spaces. Fix points $x_0\in X,y_0\in Y$ and a constant $R\ge 0$. The \emph{asymptotic product}\footnote{We guess this notion first appeared in~\cite[chapter~3]{Dranishnikov2000}} with respect to $x_0,y_0,R$ is defined to be
\[
(X,x_0)\ast_R (Y,y_0)=\{(x,y)\in X\times Y:|d(x_0,x)-d(y_0,y)|\le R\}
\]
with the subspace coarse structure from $X\times Y$.
\end{defn}

\begin{prop}
\label{prop:asymptoticproductwelldefined}
Let $X,Y$ be metric spaces, let $x_0,x_0'\in X,y_0,y_0'\in Y$ be points and let $R,R'\ge0$ be numbers. If
\begin{enumerate}
      \item both $X,Y$ are geodesic or
      \item $Y$ is visual from $y_0,y_0'$
\end{enumerate}
then $(X,x_0)\ast_R(Y,y_0)$ and $(X,x_0')\ast_{R'}(Y,y_0')$ are coarsely equivalent. 
\end{prop}
In case $X,Y$ have such nice properties we write short $X\ast Y$ for $(X,x_0)\ast_R(Y,y_0)$. This definition is well defined up to coarse equivalence.
\begin{proof}
Define 
\[
R''=d(x_0,x_0')+d(y_0,y_0')+R
\]
If $(x,y)\in (X,x_0')\ast_R(Y,y_0')$ then 
\begin{align*}
|d(x_0,x)-d(y_0,y)|
&\le |d(x_0,x)-d(x_0',x)|+|d(x_0',x)-d(y_0',y)|+|d(y_0',y)-d(y_0,y)|\\
&\le d(x_0,x_0')+R+d(y,y_0')\\
&=R''
\end{align*}
This implies $(X,x_0')\ast_R(Y,y_0')\s (X,x_0)\ast_{R''}(Y,y_0)$. In the same way we can show there is some $R'''$ such that $(X,x_0)\ast_{R''}(Y,y_0)\s (X,x_0')\ast_{R'''}(Y,y_0)$/ Thus we have shown that $X\ast Y$ is independent of the choice of points if $X\ast Y$ is independent of the choice of constant. We are going to show now the second 
assertion. 

Now we can assume $R'$ is larger than $R$. We show that the inclusion 
\[
 i:(X,x_0)\ast_R(Y,y_0)\to (X,x_0)\ast_{R'}(Y,y_0)
\]
is a coarse equivalence. It is a coarsely injective coarse map obviously.

Suppose both $X,Y$ are geodesic. We show $i$ is coarsely surjective. If 
$(x,y)\in (X,x_0)\ast_{R'}(Y,y_0)$ is not in the 
image of $i$ then
\[
      R<|d(x_0,x)-d(y_0,y)|\le R'.
\]
Assume without loss of generality that $d(x_0,x)>d(y_0,y)$. Choose $x'\in 
\overline{x_0x}$ with $d(x',x_0)=d(y,y_0)$. Here $\overline{x_0x}$ denotes the geodesic segment joining $x_0$ to $x$. Then
\begin{align*}
d(x,x')
&= d(x,x_0)-d(x',x_0)\\
&=d(x,x_0)-d(y,y_0)\\
&\le R'.
\end{align*}
Since $(x',y)\in (X,x_0)\ast_0(Y,y_0)$ and $d((x,y),(x',y))\le R'$ the map $i$ is $R'$-coarsely surjective.

In case $Y$ is visual we can proceed similarly. If $(x,y)\in (X,x_0)\ast_{R'}(Y,y_0)$ is not in the image of $i$ there is some point $y'$ on the geodesic ray extending $\overline{y_0y}$ such that $d(x_0,x)=d(y_0,y')$. Then $d(y,y')\le R'$ as before. Thus $i$ is $R'$-coarsely surjective.
\end{proof}

We denote by $p_X:X\ast Y\to X$ the projection to the first factor and by $p_Y:X\ast Y\to Y$ the projection to the second factor. Note that both $p_X,p_Y$ are coarse maps.
\begin{thm}
\label{thm:pullbackexists}
 Let $X,Y$ be metric spaces with both $X,Y$ geodesic or $Y$ 
visual. Then 
 \[
 \xymatrix{
 X\ast Y \ar[d]_{p_X}\ar[r]^{p_Y}
 & Y\ar[d]^{d(\cdot,y_0)}\\
 X\ar[r]_{d(\cdot,x_0)}
 &\R_+
 }
 \]
 is a limit diagram in the coarse category. Note that we only need the diagram to commute up to closeness.
 \end{thm}
\begin{proof}
 Let $f:Z\to X$ and $g:Z\to Y$ be two coarse maps from a metric space $Z$ such that there is some $R\ge 0$ such that
 \[
 |d(f(z),x_0)-d(g(z),y_0)|\le R.
 \]
Suppose both $X,Y$ are geodesic. For every $z\in Z$ if $d(f(z),x_0)\ge d(g(z),y_0)$ 
there is $\bar f(z)\in Y$ with
 \begin{enumerate}
 \item $d(\bar f(z),x_0)=d(g(z),y_0)$
 \item $d(\bar f(z),f(z))\le R$
 \end{enumerate}
 or if $d(f(z),x_0)< d(g(z),y_0)$ there is $\bar g(z)\in Y$ with
 \begin{enumerate}
 \item $d(f(z),x_0)=d(\bar g(z),y_0)$
 \item $d(\bar g(z),g(z))\le R$.
 \end{enumerate}
 Define 
 \begin{align*}
 \bar f:Z&\to X\\
 z&\mapsto \begin{cases}
                 f(z) & d(f(z),x_0)< d(g(z),y_0)\\
                 \bar f(z) & d(f(z),x_0)\ge d(g(z),y_0).
           \end{cases}
 \end{align*}
 Then $\bar f $ is close to $f$. Define $\bar g:Z\to Y$ in a similar way. Then $\bar g$ is close to $g$. If $X$ is not geodesic but $Y$ is visual we can define $\bar g:Z\to Y$ in a similar way and choose $\bar f =f$.
 Then define 
 \begin{align*}
  \langle f,g\rangle:Z&\to X\ast Y\\
  z&\mapsto (\bar f(z),\bar g(z)).
 \end{align*}
 This map is a coarse map: We show $\langle f,g\rangle$ is coarsely uniform: If $E\s Z^2$ is an entourage then $\zzp f E\s X^2,\zzp g E\s Y^2$ are entourages. Since $\bar g$ is close to $g$ and $\bar f$ is close to $f$ the sets $\zzp {\bar g} E,\zzp {\bar g} E$ are entourages. Then $\zzp {\langle f,g\rangle} E \s \zzp {\bar f} E\times \zzp {\bar g} E$ is an entourage. We show $\langle f,g\rangle$ is coarsely proper: If $B\s X\ast Y$ is bounded then $p_X(B),p_Y(B)$ are bounded. Then
 \[
 \langle f,g\rangle^{-1}(B)\s \ii {\bar f}\circ p_X(B)\cup \ii {\bar g}\circ p_Y(B)
 \]
 is bounded since $f,g $ and thus $\bar f,\bar g$ are coarsely proper. Also $p_X\circ \langle f,g\rangle\sim f$ and $p_Y\circ \langle f,g\rangle\sim 
g$.

Suppose there is another coarse map $h:Z\to (X,x_0)\ast_0 (Y,y_0)$ with the property that 
$p_X\circ h \sim f$ and $p_Y\circ h \sim g$. Then 
\f{
\langle f,g\rangle &\sim\langle p_X\circ h,p_Y\circ h\rangle\\
&=h
}
are close.
\end{proof}

\begin{rem}
 For every metric space $X$ we have $X=(X,x_0)\ast_0(\R_{\ge0},0)$. Thus $\R_{\ge0}$ plays the role of a point.
\end{rem}

\begin{lem}
\label{lem:approper}
 If $X,Y$ are proper geodesic metric spaces then $X\ast Y$ is a proper metric space.
\end{lem}
\begin{proof}
 We show that $X\times Y$ is a proper metric space. If $B\s X\times Y$ is bounded then the projections $B_X$ of $B$ to $X$ and $B_Y$ of $B$ to $Y$ are bounded. Since $X,Y$ are proper the sets $B_X,B_Y$ are relatively compact. Then
\[
 B\s B_X\times B_Y
\]
is relatively compact. Thus $X\times Y$ is proper. Since $X\ast Y\s X\times Y$ is a closed subspace the result follows.
\end{proof}

Since there is no metric space which is final in the coarse category we can repair this by defining a subcategory of the slice category over $\R_{\ge0}$. We chose exactly one morphism for each object $X$, namely $d(\cdot,x_0):X\to \R_{\ge0}$. For $X=\R_{\ge0}$ and $x_0=0$ this is the identity. Thus the object $\R_{\ge0}$ will play the role of the final object. 
\begin{defn}
 A coarse map $\alpha:X\to Y$ between metric spaces is called \emph{weighted} if $d(x_0,\cdot)$ and $d(y_0,\cdot)\circ \alpha$ are close for some $x_0\in X,y_0\in Y$ and hence for all choices of basepoint. The \emph{weighted coarse category} consists of metric spaces as objects and weighted coarse maps modulo close as morphism. 
\end{defn}
This definition makes sense since the composition of two weighted coarse maps is weighted coarse. Not every coarse equivalence is weighted though. Thus the weighted coarse category may not have the same isomorphism classes as the coarse category.

\begin{lem}
\label{lem:weightedproduct}
 In the weighted coarse category the asymptotic product, if well defined, is a product in the category theoretic sense. The space $\R_{\ge 0}$ is a final object in the weighted coarse category.
\end{lem}
\begin{proof}
 We first prove the second statement. If $X$ is a metric space then $d(x_0,\cdot):X\to \R_{\ge 0}$ is weighted coarse. If $\alpha:X\to \R_{\ge 0}$ is another weighted coarse map then $\alpha=\alpha\circ d(0,\cdot)$ is close to $d(x_0,\cdot)$.
 
 Now we prove the first statement. Suppose $X\ast Y$ is the pullback of $d(x_0,\cdot),d(y_0,\cdot)$ in the coarse category. Then in particular the projections $p_X,p_Y$ are weighted coarse. Namely if $(x,y)\in (X,x_0)\ast_R(Y,y_0)$ then
 \begin{align*}
 |d(\cdot,(x_0,y_0))(x,y)-d(\cdot,x_0)\circ p_X(x,y)|
 &=|\max(d(x,x_0),d(y,y_0))-d(x,x_0)|\\
 &\le R.
 \end{align*}
 Thus $p_X$ is weighted. The proof for $p_Y$ is similar. If $Z$ is another metric space with weighted coarse maps $\alpha:Z\to X,\beta:Z\to Y$ then
 \[
  d(x_0,\cdot)\circ \alpha\sim d(z_0,\cdot)\sim d(y_0,\cdot)\circ \beta
 \]
Thus the conditions of Theorem~\ref{thm:pullbackexists} are satisfied. This implies that there exists a unique coarse map $h:Z\to X\ast Y$ such that $p_X\circ h$ is close to $\alpha$ and $p_Y\circ h$ is close to $\beta$. The map $h$ is weighted since
\begin{align*}
 d((x_0,y_0),\cdot)\circ h\sim d(x_0,\cdot)\circ p_X\circ h\sim d(x_0,\cdot)\circ \alpha \sim d(z_0,\cdot).
\end{align*}
\end{proof}

\begin{cor}
 If there are weighted coarse equivalences $\alpha:X\to X',\beta:Y\to Y'$ between geodesic metric spaces then the asymptotic product $X\ast Y$ is coarsely equivalent to $X'\ast Y'$.

 If $\alpha:X\to X'$ is a weighted coarse equivalence between metric spaces and $\beta:Y\to Y'$ is a weighted coarse equivalence between visual metric spaces then there exists a coarse equivalence $X\ast Y\to X'\ast Y'$. 
\end{cor}
\begin{proof}
 If a map is weighted coarse and a coarse equivalence at the same time then it is an isomorphism in the weighted coarse category: Namely if $\alpha'$ is the coarse inverse to $\alpha$ then $\alpha'$ is weighted since
 \[
  |d(\alpha'(x'),x_0)-d(x',x_0')|\le |d(\alpha'(x'),x_0)-d(\alpha\circ \alpha'(x'),x_0')|+|d(\alpha\circ\alpha'(x'),x_0')-d(x',x_0')|
 \]
is bounded. Conversely if a weighted coarse map is an isomorphism then it is in particular a coarse equivalence.

By Lemma~\ref{lem:weightedproduct} the asymptotic product is a product in the weighted coarse category. Since products are well defined up to isomorphism   the result follows. 
\end{proof}
There are properties on metric spaces which are invariant under coarse equivalence. In particular Property A \cite{Willet2009} and Property C \cite{vanMill1984} have been widely studied.
\begin{lem}
 Let $X,Y$ be metric spaces.
 \begin{itemize}
  \item If $X,Y$ have finite asymptotic dimension so does $X\ast Y$.
  \item If $X,Y$ have Property A so does $X\ast Y$.
  \item If $X,Y$ have Property C so does $X\ast Y$.
 \end{itemize}
\end{lem}
\begin{proof}
 Suppose $X,Y$ have finite asymptotic dimension. Then $\asdim(X\times Y)<\infty$ by \cite[Proposition~3.33]{Grave2006}. By \cite[Corollary~3.23]{Grave2006} the subspace $X\ast Y$ has finite asymptotic dimension.
 
 Suppose $X,Y$ have Property A. By \cite[Theorem~I.46]{Lawson2019} the product $X\times Y$ has Property A. By \cite[Proposition~I.34]{Lawson2019} the space $X\ast Y$ has Property A.
 
 Suppose $X,Y$ have Property C. By \cite[Theorem~I.47]{Lawson2019} the space $X\times Y$ has Property C. By \cite[Proposition~I.29]{Lawson2019} the asymptotic product $X\ast Y$ has property C.
\end{proof}

\section{Coarse homotopy}

We equip $I:=\R_{\ge 0}\times \R_{\ge 0}$ with the Manhattan metric. Namely 
\[
 d((s_0,t_0),(s_1,t_1))=|s_0-s_1|+|t_0-t_1|
\]
for every $(s_0,t_0),(s_1,t_1)\in I$.

\begin{lem}
 The space $I$ is visual from $(0,0)$.
\end{lem}
\begin{proof}
 If $(x_0,y_0),(x_1,y_1)\in I$ are two points they can be connected by
 \begin{align*}
  \gamma:[0,1]&\to I\\
  t&\mapsto(x_0+t(x_1-x_0),y_0+t(y_1-y_0))
 \end{align*}
If $t,s\in[0,1]$ then
\begin{align*}
 d(\gamma(t),\gamma(s))
 &=|(x_1-x_0)(t-s)|+|(y_1-y_0)(t-s)|\\
 &=(|x_1-x_0|+|y_1-y_0|)|t-s|.
\end{align*}
Thus $\gamma$ is a geodesic. This proves that $I$ is geodesic.

If $\gamma:[a,b]\to I$ is any geodesic we prove: $\gamma(t)_1$ is an increasing or decreasing function and $\gamma(t)_2$ is an increasing or decreasing function. Assume the opposite, there are $a\le t_1<t_2<t_3\le b$ with $\gamma(t_1)_1=\gamma(t_3)_1<\gamma(t_2)_1$. We have
\[
 v|t_3-t_1|=d(\gamma(t_1),\gamma(t_3))=|\gamma(t_1)_2-\gamma(t_3)_2|
\]
and
\begin{align*}
 v|t_2-t_{1/3}|=d(\gamma(t_2),\gamma(t_{1/3}))=|\gamma(t_2)_1-\gamma(t_{1/3})_1|+|\gamma(t_2)_2-\gamma(t_{1/3})_2|>|\gamma(t_2)_2-\gamma(t_{1/3})_2|
\end{align*}
Then
\begin{align*}
 v|t_3-t_1|
 &=|\gamma(t_1)_2-\gamma(t_3)_2|\\
 &\le |\gamma(t_1)_2-\gamma(t_2)_2|+|\gamma(t_2)_2-\gamma(t_3)_2|\\
 &<v|t_2-t_{1}|+v|t_2-t_{3}|\\
 &=v|t_1-t_3|
\end{align*}
is a contradiction.

Now suppose $\gamma:[a,b]\to I$ is a geodesic joining $(0,0)$ to $(x,y)$ with speed $v$. Define 
\begin{align*}
 \tilde \gamma:[a,\infty)&\to I\\
 t&\mapsto\begin{cases}
           \gamma(t) & t\in[a,b]\\
           (x+v(t-b),y) & t\in[b,\infty). 
          \end{cases}
\end{align*}
Clearly $\tilde \gamma$ is a composition of two geodesics. Since $0$ is minimal in both factors of $I$ the function $\gamma$ is increasing in both factors. The same holds for the second geodesic. Since they increase with the same speed they glue at $(x,y)$ to a geodesic ray through $(x,y)$ starting at $(0,0)$.
\end{proof}

\begin{defn}
 Let $X,Y$ be metric spaces.
 \begin{itemize}
  \item A \emph{coarse homotopy} is a coarse map $h:X\ast I\to Y$.
  \item Define $\iota_0:X\to X\ast I$ to be the map $x\mapsto (x,(d(x,x_0),0)$ and $\iota_1:X\to X\ast I$ to be the map $x\mapsto (x,(0,d(x,x_0))$. Two coarse maps $\alpha,\beta:X\to Y$ are \emph{coarsely homotopic} if there is a coarse homotopy $h:X\ast I\to Y$ with $h\circ \iota_0$ close to $\alpha$ and $h\circ \iota_1$ close to $\beta$.
  \item A coarse map $\alpha:X\to Y$ is called a \emph{coarse homotopy equivalence} if there is a coarse map $\beta:Y\to X$ such that $\alpha\circ\beta$ is coarsely homotopic to $id_Y$ and $\beta\circ \alpha$ is coarsely homotopic to $id_X$.
  \item Two coarse spaces $X,Y$ are called \emph{coarsely homotopy equivalent} if there is a coarse homotopy equivalence $\alpha:X\to Y$.
 \end{itemize}
\end{defn}

 \begin{lem}
  This definition of coarse homotopy on coarse maps does not depend on the choice of $R\ge 0$ in $(X,x_0)\ast_R (I,(0,0))$ neither does it depend on the choice of basepoint $x_0\in X$.
 \end{lem}
 \begin{proof}
  If $R\ge 0$ then there exists a coarse equivalence $\psi:(X,x_0)\ast_R (I,(0,0))\to (X,x_0)\ast_0 (I,(0,0))$ which leaves the subspace $(X,x_0)\ast_0 (I,(0,0))\s (X,x_0)\ast_R (I,(0,0))$ invariant. Thus a coarse homotopy $h: (X,x_0)\ast_0 (I,(0,0))\to Y$ joining $h\circ\iota_0$ to $h\circ\iota_1$ gives rise to a coarse homotopy $h\circ \psi: (X,x_0)\ast_R (I,(0,0))\to Y$ joining $h\circ \psi\circ\iota_0=h\circ \iota_0$ to $h\circ \psi\circ \iota_1=h\circ \iota_1$.
  
  Composition with the inclusion $\iota:(X,x_0)\ast_0 (I,(0,0))\to (X,x_0)\ast_R (I,(0,0))$ transforms a coarse homotopy $h:(X,x_0)\ast_R (I,(0,0))\to Y$ joining $h\circ \iota_0$ to $h\circ \iota_1$ to a coarse homotopy $h\circ \iota:X\ast_0 I\to Y$ joining $h\circ\iota\circ\iota_0=h\circ \iota_0$ to $h\circ\iota\circ\iota_1=h\circ \iota_1$.
  
  If $x_1$ is another basepoint then there exists $R\ge 0$ such that $(X,x_0)\ast_R (I,(0,0))$ contains both $(X,x_0)\ast_0 (I,(0,0))$ and $(X,x_1)\ast_0 (I,(0,0))$. Then the maps $\iota_0:x\mapsto (x,(d(x,x_0),0))$ and $\iota_0':x\mapsto (x,(d(x,x_1),0))$ are close to each other and the maps $\iota_1:x\mapsto (x,(0,d(x,x_0)))$ and $\iota_1':x\mapsto (x,(0,d(x,x_1)))$ are close to each other. If $h:(X,x_1)\ast_0 (I,(0,0))\to Y$ is a coarse homotopy joining $h\circ \iota_0'$ to $h\circ \iota_1'$ then $h\circ \psi:(X,x_0)\ast_R (I,(0,0))\to Y$ is a coarse homotopy joining $h\circ\psi\circ \iota_0\sim h\circ \iota'_0$ to $h\circ\psi\circ\iota_1\sim h\circ\iota'_1$.
 \end{proof}

 To prove that two coarse homotopies can be composed we need an auxilary lemma:
 
\begin{lem}
\label{lem:coarselyexcisiveglue}
 Let $X,Y$ be metric spaces. If $A,B\s X$ are a coarsely excisive pair and $\alpha_1:A\to Y,\alpha_2:B\to Y$ are coarse maps with $\alpha_1|_{A\cap B}=\alpha_2|_{A\cap B}$ then they glue to a coarse map $\beta: A\cup B\to Y$. 
\end{lem}
\begin{proof}
 We define a map
 \begin{align*}
  \beta:A\cup B&\to Y\\
  x&\mapsto\begin{cases}
            \alpha_1(x) & x\in A\\
            \alpha_2(x) & x\in B.
           \end{cases}
 \end{align*}
 It remains to show that $\beta$ is coarse. Let $R\ge 0$ be a number. Then there exists $S\ge 0$ with
 \[
  \Delta_R[A]\cap \Delta_R[B]\s \Delta_S[A\cap B].
 \]
If $(x,y)\in \Delta_R\cap (A\times B)$ then $x\in A\cap \Delta_R[B]$. Then $x\in A\cap \Delta_S(A\cap B)$. Thus there exists $u\in A\cap B$ with $(x,u)\in \Delta_S\cap(A\times(A\cap B))$. The triangle inequality tells us that $(u,y)\in \Delta_{R+S}\cap((A\cap B)\times B)$. Thus $(x,y)\in (\Delta_S\cap(A\times(A\cap B)))\circ(\Delta_{R+S}\cap((A\cap B)\times B))$. In a similar way we show 
\[
 \Delta_R\cap (B\times A)\s (\Delta_S\cap (B\times (A\cap B)))\circ(\Delta_{R+S}\cap((A\cap B)\times A))
\]
Now we can show
\begin{align*}
 \zzp\beta{\Delta_R}
 &=\zzp\beta{\Delta_R\cap (A\times A)}\cup \zzp\beta{\Delta_R\cap (A\times B)}\cup \zzp\beta{\Delta_R\cap (B\times A)}\\
 &\quad\cup\zzp\beta{\Delta_R\cap (B\times B)}\\
 &\s\zzp\beta{\Delta_R\cap (A\times A)}\cup (\zzp\beta{\Delta_S\cap(A\times(A\cap B))}\circ\zzp \beta{\Delta_{R+S}\cap((A\cap B)\times B)})\\
 &\quad\cup (\zzp\beta{\Delta_S\cap (B\times (A\cap B))}\circ\zzp\beta{\Delta_{R+S}\cap((A\cap B)\times A)})\cup \zzp\beta{\Delta_R\cap (B\times B)}\\
 &=\zzp{\alpha_1}{\Delta_R\cap (A\times A)}\cup (\zzp{\alpha_1}{\Delta_S\cap(A\times(A\cap B))}\circ\zzp {\alpha_2}{\Delta_{R+S}\cap((A\cap B)\times B)})\\
 &\quad\cup (\zzp{\alpha_2}{\Delta_S\cap (B\times (A\cap B))}\circ\zzp{\alpha_1}{\Delta_{R+S}\cap((A\cap B)\times A)})\cup \zzp{\alpha_2}{\Delta_R\cap (B\times B)}\\
\end{align*}
is an entourage. Thus we have shown that $\beta$ is coarsely uniform. If $B\s Y$ is a bounded set then $\iip \beta B=\iip {\alpha_1} B\cup \iip{\alpha_2}B$ is bounded. Thus $\beta$ is coarsely proper. This way we have shown that $\beta$ is a coarse map.
\end{proof}

\begin{lem}
 Let $X,Y$ be metric spaces. Coarsely homotopic on maps $X\to Y$ is an equivalence relation.
\end{lem}
\begin{proof}
That coarsely homotopic is reflexive and symmetric is obvious. We prove transitive. Let $h_1:X\ast I\to Y$ be a coarse homotopy between $\alpha$ and $\beta$ and let $h_2:X\ast I\to Y$ be a coarse homotopy between $\beta$ and $\gamma$. We define
\begin{align*}
 I_1:=\{(s,t)\in I:s\ge t\}
\quad\mbox{and}\quad
 I_2:=\{(s,t)\in I:t\ge s\}.
\end{align*}
Moreover we define maps
\begin{align*}
\begin{aligned}
 \varphi_1:I_1&\to I\\
 (s,t)&\mapsto\frac{s+t}{s}(s-t,t)
\end{aligned}
& &
\begin{aligned}
 \varphi_2:I_2&\to I\\
 (s,t)&\mapsto\frac{s+t}{t}(s,t-s)
 \end{aligned}
\end{align*}
that are easily seen to be coarse. We define a map 
\begin{align*}
 h:X\ast I&\to Y\\
 (x,(s,t))&\mapsto\begin{cases}
                   h_1(x,\varphi_1(s,t)) & s\ge t\\
                   h_2(x,\varphi_2(s,t)) & t\ge s
                  \end{cases}.
\end{align*}
The composition with $\varphi_1,\varphi_2$ makes sense since 
\begin{align*}
 d(\varphi_1(s,t),(0,0))
 &=\frac{s+t}{s}(|s-t|+|t|)\\
 &=s+t\\
 &=d((s,t),(0,0)).
\end{align*}
A similar calculation shows $\varphi_2$ does not change the distance to $(0,0)$. The pair $X\ast I_1,X\ast I_2$ is coarsely excisive since
\begin{align*}
 \Delta_R[X\ast I_1]\cap \Delta_R[X\ast I_2]
 &=\{(x,(s,t))\in X\ast I:s+R\ge t,t+R\ge s,s+t=d(x,x_0)\}\\
 &=\{(x,(s,t))\in X\ast I:|\frac {d(x,x_0)}{2}-t|\le R/2,|\frac {d(x,x_0)}{2}-s|\le R/2,\\
 &\quad s+t=d(x,x_0)\}\\
 &\s \Delta_{R/2}[\{(x,(s,t))\in X\ast I:s=t=\frac{d(x,x_0)}2\}\\
 &=\Delta_{R/2}[(X\ast I_1)\cap(X\ast I_2)]
\end{align*}
for every $R\ge 0$. If $(x,(d(x,x_0)/2,d(x,x_0)/2))\in(X\ast I_1)\cap(X\ast I_2)$ then
\begin{align*}
 h_1(x,\varphi_1(d(x,x_0)/2,d(x,x_0)/2))
 &=h_1(x,(0,d(x,x_0)))\\
 &=\beta(x)\\
 &=h_2(x,(d(x,x_0),0))\\
 &=h_2(x,\varphi_2(d(x,x_0)/2,d(x,x_0)/2)).
\end{align*}
Thus Lemma~\ref{lem:coarselyexcisiveglue} shows that $h$ is a coarse map. We have
\begin{align*}
 h(x,(d(x,x_0),0))
 &=h_1(x,\varphi_1(d(x_0,x),0))\\
 &=h_1(x,(d(x,x_0),0))\\
 &=\alpha(x)
\end{align*}
and
\begin{align*}
 h(x,(0,d(x,x_0)))
 &=h_2(x,\varphi_2(0,d(x,x_0)))\\
 &=h_2(x,(0,d(x,x_0)))\\
 &=\gamma(x).
\end{align*}
Thus $h$ is a coarse homotopy connecting $\alpha$ and $\gamma$.
\end{proof}

If two coarse maps $\alpha,\beta:X\to Y$ between metric spaces are close then they are obviously coarsely homotopic. Just take the constant homotopy. Note that in practice it is easier to show that two maps are close than to construct a coarse homotopy between them.

% \begin{lem}
% \label{lem:closetohomotopic}
%  If two coarse maps $\alpha,\beta:X\to Y$ between metric spaces are close then they are coarsely homotopic.
% \end{lem}
% \begin{proof}
%  We define a coarse homotopy between $\alpha,\beta$ by
%  \begin{align*}
%   h:X\ast I&\to Y\\
%   (x,s,d(x_0,x)-s)&\mapsto\begin{cases}
%                            \alpha(x) & s=0\\
%                            \beta(x) & s>0.
%                           \end{cases}
%  \end{align*}
% It remains to show that $h$ is a coarse map. If $d((x,s,d(x,x_0)-s),(x',s',d(x,x_0)-s'))\le R$ then in particular $d(x,x')\le R$. Since $\alpha,\beta$ are coarse there exists $S\ge 0$ with $\zzp\alpha{\Delta_R}\s\Delta_S$ and $\zzp\beta{\Delta_R}\s \Delta_S$. Since $\alpha,\beta$ are close there exists $H\ge 0$ with $(\alpha\times\beta)(\Delta_R)\s \Delta_H$. Then
% \begin{align*}
%  d(h(x,s,d(x,x_0)-s),h(x',s',d(x,x_0)-s'))
%  &=\begin{cases}
%     d(\alpha(x),\alpha(x')) & s=0=s'\\
%     d(\alpha(x),\beta(x')) & s=0,s'\not=0\\
%     d(\beta(x),\alpha(x')) & s\not=0,s'=0\\
%     d(\beta(x),\beta(x')) & s\not=0,s'\not=0
%    \end{cases}
% &\le \max(S,H)
% \end{align*}
% Thus $h$ is coarsely uniform.
% 
% Suppose $B\s Y$ is a bounded subset. Then
% \[
%  \iip h B\s \iip \alpha B\cup\iip \beta B
% \]
% is bounded. Thus $h$ is coarsely proper.
% \end{proof}

There are other notions of coarse homotopy which are closely related to the one we just introduced.

We recall the coarse homotopy theory studied in \cite{Mitchener2020}:

If $p:X\to \R_{\ge0}$ is a coarse map then
\[
 I_pX:=\{(x,t)\in X\times \R_{\ge 0}\mid t\le p(x)+1\}
\]
is a cone over $X$. There are inclusions
\begin{align*}
 \begin{aligned}
  i_0:X&\to I_pX\\
  x&\mapsto(x,0)
 \end{aligned}
&\quad&
\begin{aligned}
 i_1:X&\to I_pX\\
 x&\mapsto (x,p(x)+1).
\end{aligned}
\end{align*}
Then two coarse maps $\alpha,\beta:X\to Y$ are said to be coarsely homotopic if there exists a coarse map $h:I_pX\to Y$ with $\alpha=h\circ i_0$ and $\beta=h\circ i_1$.

In some cases, for example if $X$ is a path-metric space, the coarse homotopy  relation on coarse maps $X\to Y$ is independent of the choice of coarse map $p$. Suppose in what follows that $p=d(x_0,\cdot)$.
\begin{prop}
 Two coarse maps $\alpha,\beta:X\to Y$ between metric spaces are coarsely homotopic in our sense exactly when they are coarsely homotopic according to 
\cite{Mitchener2020}.
\end{prop}
\begin{proof}
  We provide maps
 \begin{align*}
 \begin{aligned}
  \Phi:X\ast I&\to I_pX\\
  (x,(s,d(x_0,x)-s))&\mapsto(x,s)
  \end{aligned}&\quad&
  \begin{aligned}
   \Psi:I_pX&\to X\ast I\\
   (x,s)&\mapsto\begin{cases}
                (x,(s,d(x_0,x)-s)) & s\le d(x,x_0)\\
                (x,(d(x_0,x),0)) & \mbox{otherwise}
               \end{cases}
  \end{aligned}
 \end{align*}
 We first show $\Psi$ is the coarse inverse to $\Phi$ by showing these maps are coarsely uniform and their composition is close to the identity. Let $R\ge 0$ be a number. Then
\begin{align*}
 \zzp \Phi{\Delta_R}
 &=\zz\Phi\{((x,(s,d(x,x_0)-s)),(y,(t,d(y,y_0))))\in X\ast I\mid d(x,y)\le R, d(s,t)\le R/2\}\\
 &=\{((x,s),(y,t))\in I_p:d(x,y)\le R,d(s,t)\le R/2\}\\
 &\s \Delta_R
\end{align*}
and
\begin{align*}
 \zzp \Psi{\Delta_R}
 &=\zz \Psi\{((x,s),(y,t))\in I_pX\mid d((x,s),(y,t))\le R\}\\
 &=\zz \Psi\{((x,s),(y,t))\in I_pX\mid d(x,y)\le R,d(s,t)\le R\}\\
 &\s\{((x,(s,d(x,x_0)-s)),(y,(t,d(y,y_0)-t)))\in X\ast I\mid d(x,y)\le R,d(s,t)\le R\}\\
 &\s \Delta_{2R}.
\end{align*}
Thus, $\Phi,\Psi$ are coarsely uniform. We have $\Psi\circ\Phi=id_{X\ast I}$ and
\[
 \Phi\circ\Psi(x,s)=\begin{cases}
                     (x,s)& s\le d(x,x_0)\\
                     (x,d(x,x_0)) & \mbox{otherwise}
                    \end{cases}
\]
is close to the identity on $I_pX$. Thus $\Phi,\Psi$ are coarse inverses. If 
\begin{align*}
 \begin{aligned}
  \iota_0:X&\to X\ast I\\
  x&\mapsto (x,(0,d(x_0,x)))
 \end{aligned}
 &\quad&
\begin{aligned}
 \iota_1:X&\to X\ast I\\
 x&\mapsto (x,(d(x_0,x),0))
\end{aligned}
\end{align*}
are the inclusions then 
\[
\Phi\circ \iota_0(x)=\Phi(x,(0,d(x,x_0)))=(x,0)=i_0(x)
\]
and
\[
\Phi\circ \iota_1(x)=\Phi(x,(d(x,x_0),0))=(x,d(x,x_0)) 
\]
is close to $i_1$. Likewise
\[
 \Psi\circ i_0(x)=\Psi(x,0)=(x,(0,d(x,x_0)))=\iota_0(x)
\]
and
\[
 \Psi\circ i_1(x)=\Psi(x,d(x_0,x)+1)=(x,(d(x,x_0),0))=\iota_1(x).
\]
If $h:I_p X\to Y$ is a coarse homotopy joining $\alpha$ to $\beta$ then $h\circ\Phi:X\ast I\to Y$ is a coarse homotopy joining
\[
h\circ \Phi\circ\iota_0=h\circ i_0=\alpha
\]
to $h\circ \Phi\circ \iota_1$ which is close to $ h\circ i_1=\beta$. By definition the map $\alpha$ is coarsely homotopic to $\beta$ via $h\circ\Phi\circ \iota_1$.

If $g:X\ast I\to Y$ is a coarse homotopy joining $\gamma $ to $\delta$ then
$h\circ\Psi:I_pX\to Y$ is a coarse homotopy joining
\[
 h\circ\Psi\circ i_0=h\circ\iota_0=\gamma
\]
to
\[
 h\circ\Psi\circ i_1=h\circ\iota_1=\delta.
\]
\end{proof}

\section{Acyclic Spaces}
There is a class of metric spaces on which most coarse cohomology theories vanish. Let us translate~\cite[Definition~3.6]{Willet2013} into coarse structure notation:
\begin{defn}
 A metric space $X$ is called \emph{flasque} if there is a coarse map $\phi:X\to X$ such that
 \begin{itemize}
 \item $\phi$ is close to the identity on $X$;
  \item for every bounded set $B\s X$ there is some $N_B\in \N$ such that 
  \[
   \phi^{\circ n}(X)\cap B=\emptyset
  \]
  for every $n> N_B$.
  \item For every entourage $E$ the set $\bigcup_n\zzp{(\phi^{\circ n})}E$ is an 
entourage.
 \end{itemize}
\end{defn}

\begin{thm}
\label{thm:flasqueacyclic}
 If $X$ is a flasque metric space then there is a coarse homotopy equivalence
 \f{
  \varPhi:X\times \Z_{\ge0}&\to X\\
  (x,i)&\mapsto \phi^{\circ i}(x).
 }
 Here $\phi^{\circ 0}$ denotes the identity on $X$.
\end{thm}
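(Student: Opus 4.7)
The coarse inverse I propose is $\Psi:X\to X\times \Z_+$, $x\mapsto (x,0)$. On the nose $\varPhi\circ \Psi=id_X$, so one half of the homotopy equivalence is automatic. The only substantial task is to produce a coarse homotopy between
\[
 \Psi\circ \varPhi:(x,i)\mapsto (\phi^{\circ i}(x),0)
\]
and $id_{X\times \Z_+}$.

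The natural interpolation flows along $\phi$: for $t\in [0,1]$ I put
\[
 h_t(x,i)=\bigl(\phi^{\circ \lfloor ti\rfloor}(x),\,i-\lfloor ti\rfloor\bigr),
\]
so that $h_0=id_{X\times \Z_+}$ and $h_1=\Psi\circ \varPhi$. Packaged as the single map
\[
 h:(X\times \Z_+)\ast \digamma([0,1])\to X\times \Z_+,\quad \bigl((x,i),(t,n)\bigr)\mapsto \bigl(\phi^{\circ \lfloor ti\rfloor}(x),\,i-\lfloor ti\rfloor\bigr),
\]
this supplies the data of Definition~\ref{defn:homotopy}, and the theorem reduces to showing that $h$ is a coarse map.

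Coarse uniformity of $h$ splits by regime. On pairs that share the same parameter $(t,n)$, the first coordinate of $h$ applied to $(x_1,x_2)$ is $(\phi^{\circ k}(x_1),\phi^{\circ k}(x_2))$ with $k=\lfloor ti\rfloor$; flasqueness condition (3) places this in a fixed entourage independent of $k$. On pairs with drifting parameters $(t_1,n_1),(t_2,n_2)$, the metric on $\digamma([0,1])$ together with the asymptotic product constraint force $|t_1-t_2|$ to decay like $1/n$ as one moves out, so $|\lfloor t_1i_1\rfloor-\lfloor t_2 i_2\rfloor|$ stays bounded; then flasqueness condition (1) (closeness of $\phi$ to the identity) lets me bridge between the two $\phi$-powers at uniform cost. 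Coarse properness is more direct: a bounded $B\s X\times \Z_+$ sits in $B_X\times [0,N]$, and condition (2) forces $\lfloor ti\rfloor\le N_{B_X}$ on the preimage, whence $i\le N+N_{B_X}$ and $x$ belongs to the preimage of $B_X$ under one of the finitely many coarse maps $\phi^{\circ 0},\dots,\phi^{\circ N_{B_X}}$.

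The main obstacle I expect is precisely the drifting-parameter estimate in coarse uniformity, where one has to interleave the $1/n$-scale of the $[0,1]$-factor of $\digamma([0,1])$ with the step sizes of a $\phi$-orbit, invoking condition (3) uniformly in $n$ and then condition (1) to absorb the bounded residual shift in the exponent. This is where all three flasqueness axioms have to cooperate at once.
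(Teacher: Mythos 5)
Your proposal matches the paper's own proof essentially step for step: the same inverse $x\mapsto(x,0)$, the same interpolating homotopy (your second coordinate $i-\lfloor ti\rfloor$ differs from the paper's $\lfloor(1-t)i\rfloor$ by at most $1$), and the same division of labour among the three flasqueness axioms for uniformity and properness. If anything, you are slightly more explicit than the paper about the drifting-parameter case, where one must bridge between the two powers $\phi^{\circ\lfloor t_1i_1\rfloor}$ and $\phi^{\circ\lfloor t_2i_2\rfloor}$ using closeness of $\phi$ to the identity.
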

\begin{proof}
First we prove that $\varPhi$ is a coarse map. Let $R\ge 0$ be a number. If $d((x,i),(y,j))\le R$ then in particular $d(x,y)\le R$. Then
\[
 \zzp\varPhi{\Delta_R}\s\bigcup_n\zzp{(\phi^{\circ n})}{\Delta_R}
\]
is an entourage. Thus $\varPhi$ is coarsely uniform. If $B\s X$ is a bounded set then there exists some $N\in \N$ with $\im(\phi^{\circ n})\cap B=\emptyset$ for every $n>N$. Then
\[
 \varPhi^{-1}(B)\s \phi^{-\circ0}(B)\cup\cdots\cup \phi^{-\circ N}(B)
\]
is bounded. Thus $\varPhi$ is coarsely proper.

We show that the coarse homotopy inverse to $\varPhi$ is
\f{
i_0:X &\to X\times \Z_{\ge0}\\
x &\mapsto (x,0).
}
We obtain $\varPhi\circ i_0=id_X$. It remains to show that $i_0\circ \varPhi$ and $id_{X\times \Z_{\ge0}}$ are coarsely homotopic. 
Define a map
\begin{align*}
 h:(X\times \Z_{\ge 0})\ast I&\to X\times \Z_{\ge 0}\\
((x,i),(s,\max(d(x,x_0),i)-s))&\mapsto (\phi^{\circ\lfloor \hat s i\rfloor}(x),\lfloor(1-\hat s)i\rfloor).
 \end{align*}
 Here $\hat s:=\frac s{\max(d(x,x_0),i)}$ denotes the normalized parameter. We observe that $h|_{(X\times\Z_{\ge 0})\ast(0\times \Z_{\ge0})}=id_{X\times\Z_{\ge 0}}$ and $h|_{(X\times\Z_{\ge 0})\ast(\Z_{\ge0}\times 0)}=i_0\circ\varPhi$. Now we just need to show that $h$ is a coarse map. First we show the map $h$ is coarsely uniform: Let $R\ge 0$ be a number. If $p_1:X\times \Z_{\ge0}\to X$ denotes the projection to the first factor and $p_2:X\times\Z_{\ge 0}\to \Z_{\ge 0}$ denotes the projection to the second factor then $\zzp h{\Delta_R}$ is an entourage if $\zz {p_1}\circ\zzp h{\Delta_R}$ and $\zz{p_2}\circ\zzp h{\Delta_R}$ are entourages. If $d(((x,i),(s,\max(d(x,x_0),i)-s)),((y,j),(t,\max(d(y,x_0),j)-t)))\le R$ then in particular $d(x,y)\le R,|i-j|\le R$ and $|s-t|\le R$. Now
 \[
  \zz {p_1}\circ \zzp h{\Delta_R}\s \bigcup_n\zzp{(\phi^{\circ n})}{\Delta_R}
 \]
is an entourage. 
\begin{align*}
d((p_2\circ h((x,i),(s,s')),p_2\circ h((y,j),(t,t')))
&=|\lfloor (1-\hat s)i\rfloor-\lfloor (1-\hat t)j\rfloor|\\
&\le |(1-\hat s)i-(1-\hat t)j|+2\\
&\le |i-j|+|\hat si-\hat tj|+2\\
&\le |i-j|+|\hat s i -\hat t i|+|\hat ti-\hat t j|+2\\
&\le R+R+R+2
\end{align*}
Thus $\zz{p_{2}}\circ \zz h(\Delta_R)$ is an entourage.

Now we show that $h$ is coarsely proper: Let $B\s X\times \Z_{\ge0}$ be a bounded subset. We 
write
 \[
  B=\bigcup_i B_i\times i
 \]
 which is a finite union. Then for every $i$ there is some $N_i$ such that
 \[
 \phi^{\circ n}(X)\cap B_i=\emptyset
 \]
for every $n> N_i$. The set $\iip h B$ is bounded if $p_1\circ p_{X\times \Z_{\ge 0}}\circ h^{-1}(B)$ and $p_2\circ p_{X\times \Z_{\ge 0}}\circ h^{-1}(B)$ are bounded. Now
\[
p_1\circ p_{X\times\Z_{\ge 0}}\circ \iip h B\s \bigcup_i (\phi^{\circ-0}(B_i)\cup\cdots\cup\phi^{\circ- 
N_i}(B_i))
\]
is bounded in $X$. If $j\in p_2\circ p_{X\times \Z_\ge0}\circ \iip h B$ then $\lfloor tj\rfloor \le 
N_i$ for at least one $i$ and some $t\in[0,1]$. Thus
\[
 j\le \max_i N_i
\]
 is bounded in $\Z_{\ge0}$.
\end{proof}

\begin{ex}
 Note that $\Z_{\ge 0}$ is flasque by
 \f{
 \phi:\Z_{\ge0}&\to \Z_{\ge0}\\
  n&\mapsto n+1.
 }
 Thus there is a coarse homotopy equivalence ${\Z_{\ge0}}^2\to \Z_{\ge0}$. Now for every $n$ the space ${\Z_{\ge0}}^n$ is flasque. As a result ${\Z_{\ge 0}}^n$ is coarsely homotopy equivalent to $\Z_{\ge0}$ for every $n$.
\end{ex}

\section{Finite colimits}

\begin{defn}
 Let $(A,d_A),(B,d_B)$ be two nonempty metric spaces. The \emph{coarse disjoint union $A\vee B$ of $A,B$} is defined to be
 \[
  A\sqcup B/a_0\sim b_0
 \]
where $a_0\in A,b_0\in B$ are two fixed points and
\begin{align*}
 d:(A\vee B)\times(A\vee B)&\to \R\\
 (x,y)&\mapsto\begin{cases}
               d_A(x,y) & x,y\in A\\
               d_A(x,a_0)+d_B(b_0,y) & x\in A,y\in B\\
               d_B(x,b_0)+d_A(a_0,y) & x\in B,y\in A\\
               d_B(x,y) & x,y\in B
              \end{cases}.
\end{align*}

\end{defn}

In \cite{Weighill2016} a similar Definition has been made. They ar equivalent since they share the same universal property:
\begin{lem}
\label{lem:coproduct}
 If $A,B$ are metric spaces then $A\vee B$ is a metric space. It is a coproduct of $A,B$.
\end{lem}
\begin{proof}
First we prove that $A\vee B$ is a metric space. If $x,y\in A\vee B$ are two points with $d(x,y)=0$ and both $x,y\in A$ or both $x,y\in B$ then $x=y$ is clear. If $x\in A$ and $y\in B$ then $0=d(x,y)=d(x,a_0)+d(b_0,y)$ implies $d(x,a_0)=0$ and $d(b_0,y)=0$ thus $x=a_0=b_0=y$. Symmetry of $d$ is clear. We prove the triangle inequality. If $x,y\in A$ and $z\in B$ then
\begin{align*}
 d(x,y)+d(y,z)
 &=d(x,y)+d(y,a_0)+d(b_0,z)\\
 &\ge d(x,a_0)+d(b_0,z)\\
 &=d(x,z).
\end{align*}
Now we prove $A\vee B$ with the natural inclusions $i:A\to A\vee B$ and $j:B\to A\vee B$ is a coproduct. Suppose $C$ is a metric space and $\alpha:A\to C,\beta:B\to C$ are coarse maps. Then define
\begin{align*}
 \gamma:A\vee B&\to C\\
 x&\mapsto\begin{cases}
           \alpha(x) & x\in A\\
           \beta(x) & x\not\in A.
          \end{cases}
\end{align*}
Then $\gamma\circ i=\alpha$ and $\gamma\circ j$ is close to $\beta$. It remains to show that $\gamma$ is a coarse map. If $R\ge 0$ then there exist $\alpha(R),\beta(R)\ge0$ with $\zzp\alpha{\Delta_R}\s\Delta_{\alpha(R)}$ and $\zzp \beta{\Delta_R}\s\Delta_{\beta(R)}$. If $x,y\in A\vee B$ with $d(x,y)\le R$ then there are 4 cases. If both $x,y\in A$ then $d(\gamma(x),\gamma(y))=d(\alpha(x),\alpha(y))\le\alpha(R)$. If both $x,y\not\in A$ then $d(\gamma(x),\gamma(y))=d(\beta(x),\beta(y))\le \beta(R)$. If $x\in Ay\not\in A$ then
\begin{align*}
 d(\gamma(x),\gamma(y))
 &=d(\alpha(x),\beta(y))\\
 &\le d(\alpha(x),\alpha(a_0))+d(\alpha(a_0),\beta(b_0))+d(\beta(b_0),\beta(y))\\
 &\le \alpha(R)+d(\alpha(a_0),\beta(b_0))+\beta(R).
\end{align*}
The case $x\not\in A,y\in A$ is similar. Thus $\gamma$ is coarse.

If $\delta:A\vee B\to C$ is another coarse map which fits in the pushout diagram then $\delta|_A=\delta\circ i\sim\alpha=\gamma|_A$ and $\delta|_B=\delta\circ j \sim \beta\sim \gamma|_B$. Thus $\delta\sim \gamma$ is unique.

\end{proof}
 Two subsets $U,V\s X$ form a coarse disjoint union of $X$ if for every $R\ge0$ the sets $\Delta_R[U^c]\cap \Delta_R[V^c], U\cap V$ are bounded.
\begin{lem}
 If $U,V$ is a coarse disjoint union of a metric space $X$ then $X, U\vee V$ are coarsely equivalent.
\end{lem}
\begin{proof}
 We define a map 
 \begin{align*}
  \varphi:X&\to U\vee V\\
  x&\mapsto \begin{cases}
             [x] & x\in U\cup V\\
             [u_0] & x\in (U\cup V)^c.
            \end{cases}
 \end{align*}
  It remains to show that $\varphi$ is a coarse equivalence. First we show $\varphi$ is coarsely uniform. If $R\ge 0$ and $d_X(x,y)\le R$ we distinguish three cases. If $(x,y)\in U^2$ then $d_{U\vee V}([x],[y])= d_U(x,y)\le R$. If $x\in U,y\in V$ then since $\Delta_R[U]\cap\Delta_R[V]$ is bounded there exists $S_R\ge 0$ with $d_U(x,u_0)\le S_R,d_V(y,v_0)\le S_R$. Then 
  \[
  d_{U\vee V}([x],[y])=d_U(x,u_0)+d_V(v_0,y)\le 2S_R.
  \]
  If $x\in (U\cup V)^c,y\in U$ then since $(U\cup V)^c$ is bounded there exists some $S\ge 0$ with $d_X(x,u_0)\le S$. Then 
  \[
  d_{U\vee V}(\varphi(x),\varphi(y))=d_X(u_0,y)\le d_X(u_0,x)+d_X(x,y)\le S+R.
  \]
  The other cases are similar. Thus $\varphi$ is coarsely uniform. 
  
 Now we show that $\varphi$ is coarsely injective. If $R\ge 0$ and $d(\varphi(x),\varphi(y))\le R$ we again distinguish three cases. If $x,y\in U$ then $d_X(x,y)=d_{U\vee V}([x],[y])\le R$. If $x\in U,y\in V$ then $d([x],[y])=d(x,u_0)+d(v_0,y)$. Then
 \[
  d(x,y)\le d(x,u_0)+d(u_0,v_0)+d(v_0,y)\le R+d(u_0,v_0).
 \]
If $x\in (U\cup V)^c,y\in U$ then since $(U\cup V)^c$ is bounded, there exists some $S\ge 0$ with $d(x,u_0)\le S$. Then 
\[
 d(x,y)\le d(x,u_0)+d(u_0,y)\le S+R.
\]
The other cases are similar.
 
 Since $\varphi$ is surjective we have proven that is a coarse equivalence.
\end{proof}

In the following definition we use there is a more general concept of coarse structure. It has to satisfy a certain set of axioms \cite{Roe2003}. The coarse structure of a metric space is an example for this. We have that the intersection of coarse structures is again a coarse structure so we can give a collection of subsets of $X\times X$ and talk about the coarse structure they generate.
\begin{defn}
 Let $\alpha,\beta:X\to Y$ be two coarse maps between metric spaces. Then the space $Y(\alpha,\beta)$ is defined to be $Y$ as a set equiped with the coarse structure generated by $(\Delta_R)_R$ and
 \[
  \Delta(\alpha,\beta)=\{(\alpha(x),\beta(x))\mid x\in X\}.
 \]
The \emph{asymptotic coequalizer of $\alpha,\beta$} is $q:=id_Y:Y\to Y(\alpha,\beta)$.
\end{defn}

\begin{prop}
\label{prop:coequalizer}
 If $\alpha,\beta:X\to Y$ are two coarse maps between metric spaces then $Y(\alpha,\beta)$ is a coequalizer of $\alpha,\beta$ in the category of metric spaces and coarse maps modulo close.
\end{prop}
\begin{proof}
We first remark that $Y(\alpha,\beta)$ is a metric space since its coarse structure is countably generated \cite{Roe2003}.

 Now we prove that $q$ is a coarse map. Since every entourage in $Y$ is an entourage in $Y(\alpha,\beta)$ the map $q$ is coarsely uniform. If $z\in Y$ then
 \begin{align*}
  \Delta(\alpha,\beta)[z]&=\{y\in Y\mid\exists x\in X,\alpha(x)=y,\beta(x)=z\}\\
  &=\{\alpha(x)\mid x\in \beta^{-1}(z)\}\\
  &=\alpha\circ\beta^{-1}(z)
 \end{align*}
is bounded. Similarly we obtain
\begin{align*}
 \Delta(\alpha,\beta)^{-1}[z]
 &=\Delta(\beta,\alpha)[z]\\
 &=\beta\circ\alpha^{-1}(z)
\end{align*}
is bounded. By an inductive argument every bounded set in $Y(\alpha,\beta)$ is already bounded in $Y$. Thus $q$ is coarsely proper. Now $(q\circ \alpha\times q\circ \beta)(\Delta_X)=\Delta(\alpha,\beta)$ is an entourage in $Y(\alpha,\beta)$. Thus $q\circ \alpha,q\circ\beta$ are close.

Now we prove the universal property. If $r:Y\to Z$ is a coarse map with $r\circ\alpha$ close to $r\circ \beta$ then $\{(r\circ \alpha(x),r\circ \beta(x)\mid x\in X\}$ is an entourge. Define 
\begin{align*}
r':Y(\alpha,\beta)&\to Z\\
y&\mapsto r(y).
\end{align*}
If $R\ge0$ then $\zzp{r'}{\Delta_R}$ is an entourage since $r=r'$. Moreover
\[
 \zzp {r'}{\Delta(\alpha,\beta)}=\{(r\circ \alpha(x),r\circ \beta(x)\mid x\in X\}
\]
is an entourage. Thus $r'$ is coarsely uniform. Since every bounded set in $Y$ is bounded in $Y(\alpha,\beta)$ the map $r'$ is coarsely proper. Thus $r'$ is coarse with $r= r'\circ q$. If $r'':Y(\alpha,\beta)\to Z$ is another coarse map with $r\sim r''\circ q$ then $r''\sim r'$.
\end{proof}

\begin{thm}
 The category of metric spaces and coarse maps modulo close has all finite colimits.
\end{thm}
\begin{proof}
 By Lemma~\ref{lem:coproduct} the category has all finite coproducts. By Proposition~\ref{prop:coequalizer} the category has all coequalizers. The claim is then a general result in category theory.
\end{proof}

\bibliographystyle{halpha-abbrv}
\bibliography{mybib}

\address

\end{document}